\newcommand{\Rneginf}{\ensuremath{\mathbb{R}_{-\infty}}}
\newcommand{\Rposinf}{\ensuremath{\mathbb{R}_{\infty}}}
\newcommand{\R}{\ensuremath{\mathbb{R}}}
\newcommand{\Bsl}{\ensuremath{\mathcal{B}^j_{\text{slack}}}}
\newcommand{\Bsu}{\ensuremath{\mathcal{B}^j_{\text{surplus}}}}
\newcommand{\Bijsl}{\ensuremath{\mathcal{B}^{ij}_{\text{slack}}}}
\newcommand{\Bijsu}{\ensuremath{\mathcal{B}^{ij}_{\text{surplus}}}}
\newcommand{\lw}{\ensuremath{\overline{\ell}}}
\newcommand{\uw}{\ensuremath{\overline{u}}}
\newcommand{\code}[1]{\texttt{#1}}
\newcommand{\lnew}{\ell^{\text{new}}}
\newcommand{\unew}{u^{\text{new}}}
\newcommand{\ntotal}{\ensuremath{n^{\text{total}}}}
\newcommand{\ncurrent}{\ensuremath{n^{\text{current}}}}
\newcommand{\Pfinite}{\ensuremath{\mathcal{P}^{\text{fin}}}}
\newcommand{\Pinfinite}{\ensuremath{\mathcal{P}^{\text{inf}}}}
\newcommand{\Prog}{\ensuremath{\mathcal{P}}\xspace}
\newcommand{\seq}{\textsl{seq\_prop}\xspace}
\newcommand{\gpu}{\textsl{gpu\_prop}\xspace}
\xpatchcmd{\@todo}{\setkeys{todonotes}{#1}}{\setkeys{todonotes}{inline,color=green!15!yellow,#1}}{}{}
\renewcommand{\mkbegdispquote}[2]{\itshape}
\newtheorem{observation}{Observation}
\newtheorem{corollary}{Corollary}
\def\BibTeX{{\rm B\kern-.05em{\sc i\kern-.025em b}\kern-.08em
    T\kern-.1667em\lower.7ex\hbox{E}\kern-.125emX}}
\newtheorem{definition}{Definition}
\begin{document}

\begin{center}
 {\LARGE An Algorithm-Independent Measure of Progress for Linear Constraint Propagation}
\end{center}

\vspace{8mm}

\textbf{Boro Sofranac}\hfill{\ttfamily sofranac@zib.de}\\
{\small\emph{Berlin Institute of Technology and Zuse Institute Berlin}}

\textbf{Ambros Gleixner}\hfill{\ttfamily gleixner@zib.de}\\
{\small\emph{HTW Berlin and Zuse Institute Berlin}}

\textbf{Sebastian Pokutta}\hfill{\ttfamily pokutta@zib.de}\\
{\small\emph{Berlin Institute of Technology and Zuse Institute Berlin}}

\vspace{3mm}

\begin{abstract}
  Propagation of linear constraints has become a crucial sub-routine in
modern Mixed-Integer Programming (MIP) solvers.
In practice, iterative algorithms with tolerance-based stopping criteria are
used to avoid problems with slow or infinite convergence.
However, these heuristic stopping criteria can pose difficulties for fairly
comparing the efficiency of different implementations of iterative
propagation algorithms in a real-world setting.
Most significantly, the presence of unbounded variable domains in the problem 
formulation makes it difficult to quantify the relative size of reductions performed on them.
In this work, we develop a method to measure---independently of the algorithmic
design---the progress that a given iterative propagation procedure has
made at a given point in time during its execution.
Our measure makes it possible to study and better compare the behavior of bounds
propagation algorithms for linear constraints.
We apply the new measure to answer two questions of practical relevance:
(i) We investigate to what extent heuristic stopping criteria can lead to
premature termination on real-world MIP instances.
(ii) We compare a GPU-parallel propagation algorithm against a sequential
state-of-the-art implementation and show that the parallel version is even more
competitive in a real-world setting than originally reported.
\end{abstract}

\section{Introduction}
This paper is concerned with \emph{Mixed-Integer Linear Programs} (MIPs) of the form
\begin{equation}
\label{eq:MIP}
\min \{c^Tx \; | \; Ax \leq b, \ell \le x \le u, x \in \mathbb{R}^n , x_j \in \mathbb{Z} \; \text{for all} \; j \in I\},
\end{equation}
\noindent
where $A \in \R^{m \times n}$, $b \in \R^m$, $c \in \R^n$, and $I \subseteq \mathbb{N} = \{1,\ldots, n\}$. Additionally, $\ell \in \Rneginf^n$ and $u \in \Rposinf^n$, where $\Rposinf:=\mathbb{R} \cup \{\infty\}$ and $\Rneginf:=\mathbb{R} \cup \{-\infty\}$. For each variable $x_j$, the interval $[\ell_j, u_j]$ is called its \emph{domain}, which is defined by its lower and upper \emph{bounds} $\ell_j$ and $u_j$, which may be infinite.

Surprisingly fast solvers for solving MIPs have been developed in practice despite MIPs being $\mathcal{NP}$-hard in the worst case \cite{AchterbergWunderling2013,KochMartinPfetsch2013}. To this end, the most successful method has been the \emph{branch-and-bound} algorithm \cite{10.2307/1910129} and its numerous extensions. The key idea of this method is to split the original problem into several sub-problems (\emph{branching}) which are hopefully easier to solve. By doing this recursively, a \emph{search tree} is created with nodes being the individual sub-problems. The \emph{bounding} step solves relaxations of sub-problems to obtain a lower bound on their solutions. This bound can then be used to prune sub-optimal nodes which cannot lead to improving solutions. By doing this, the algorithm tries to avoid having to enumerate exponentially many sub-problems. The most common way to obtain a relaxation of a sub-problem is to drop the integrality constraints of the variables. This yields a \emph{Linear Program} (LP) which can be solved e.g., by the simplex method \cite{NemhauserWolsey1988}.

This core idea is extended by numerous techniques to speed up the solution process. One of the most important techniques is called \emph{constraint propagation}. It improves the formulation of the (sub)problem by removing parts of domains of each variable that it detects cannot lead to \emph{feasible} solutions \cite{cp_handbook}. The more descriptive term \emph{bounds propagation} or \emph{bounds tightening} is used to denote the variants that maintain a continuous interval as domain. Modern MIP solvers make use of this technique during \emph{presolving} in order to improve the global problem formulation \cite{Savelsbergh1994}, as well as during the branch-and-bound algorithm to improve the formulation of the sub-problems at the nodes of the search tree \cite{Achterberg2009}.

In practice, efficient implementations exist in MIP solvers \cite{doi:10.1287/ijoc.2018.0857,Achterberg2009} and recently even a GPU-parallel algorithm \cite{SofranacGleixnerPokutta2020} has been developed. These are iterative methods, which may converge to the tightest bounds only at infinity. For such methods, the presence of unbounded variable domains in the problem formulation makes the quantification of the relative distance to the final result at a given iteration difficult. (Iterative bounds tightening has a unique fixed point to which it converges, see Section \ref{sec:dom_prop_background}.) In turn, this makes it difficult to define an implementation-independent measure of how much progress these algorithms have achieved at a given iteration.

In this paper, we address this difficulty and introduce tools to study and compare the behavior of iterative bounds tightening algorithms in MIP.
We show that the reduction of infinite bounds to some finite values is a fundamentally different process from the subsequent (finite) improvements thereafter, and thus propose to measure the ability of an algorithm to make progress in each of the processes independently. We show how the challenge posed by infinite starting bounds can be solved and provide methods for measuring the progress of both the infinite and the finite domain reductions. Pseudocode and hints are provided to aid independent implementation of our procedure. Additionally, the code of our own implementation is made publicly available.

On the applications side, the new procedure is used to investigate two questions.
First, we analyze to what extent heuristic, tolerance-based stopping criteria as typically imposed by real-world MIP solvers can cause iterative bounds tightening algorithms to terminate prematurely; we find that this situation occurs rarely in practice.
Second, we compare a newly developed, GPU-based propagation algorithm~\cite{SofranacGleixnerPokutta2020} to a state-of-the-art sequential implementation in a real-world setting where both are terminated early; we show that the GPU-parallel version is even more competitive than originally reported.

The rest of the paper is organized as follows. After presenting the necessary background and motivation in Section~\ref{sec:background_and_motivation}, we discuss the properties of bounds propagation and its ability to perform reductions on infinite and on finite bounds in Section~\ref{sec:finite_and_infinite_domain_reductions}. Based on the findings, we present functions used to measure the progress of bounds tightening algorithms in Section~\ref{sec:measuring_function}. Lastly, in Section~\ref{sec:applications}, we apply the developed procedure to answer the above-mentioned questions and present our computational results. Section~\ref{sec:outlook} gives a brief outlook.

\section{Background and Motivation}
\label{sec:background_and_motivation}

In Section \ref{sub:cp_and_mip}, we introduce some basic terminology used in the Constraint Programming (CP) and MIP communities, related to constraint propagation. Section \ref{sec:dom_prop_background} formally presents bounds propagation of linear constraints alongside some known results from literature that are relevant for the discussions in the paper. In Section \ref{sec:motivation} we outline the problems that motivate the paper.

\subsection{Constraint Propagation in CP and MIP}
\label{sub:cp_and_mip}

In the Constraint Programming (CP) community, constraint propagation appears in a variety of forms, both in terms of the algorithms and its desired goals \cite{cp_handbook}.
The propagation algorithms are implemented via mappings called \emph{propagators}. A propagator is a monotonically decreasing function from variable domains to variable domains \cite{eff_prop_engines}. The goal of most propagation algorithms is fomalized through the notion of \emph{consistency}, which these algorithms strive to achieve. The most successful consistency technique is \emph{arc consistency} \cite{MACKWORTH197799}. Multivariate extension of arc consistency has been called \emph{generalized arc consistency} \cite{masini}, as well as \emph{domain consistency} \cite{VANHENTENRYCK1998139}, and \emph{hyper-arc consistency} \cite{marriot}. Informally speaking, a given domain is \emph{domain consistent} for a given constraint if it is the least domain containing all solutions to the constraint (see \cite{cp_handbook} for a formal definition).

The main idea of \emph{bounds consistency} is to relax the consistency requirement to only require the lower and the upper bounds of domains of each variable to fulfill it. There are several bounds consistency notions in the CP literature \cite{choi}. In this paper, we adopt the notion of bounds consistency from \cite[Definition 2.7]{Achterberg2009}.

Modern CP solvers often work with a number of propagators which might or might not strive for different levels of consistency \cite{eff_prop_engines}. In this setting, the notions such as \emph{greatest common fixed point} (see \cite[Definition 4]{intbdpropcomplexity}) and consistency of a system of constraints are often analysed as a product of a set of propagators.  Solvers often focus on optimizing the interplay between different propagators (e.g., see \cite{eff_prop_engines}) to quickly decide feasibility.

In MIP solving, constraint propagation additionally interacts with many other
components that are mostly focused on reaching and proving optimality,
see~\cite{Achterberg_2009_2,scil,simpl,branch_and_infer} for examples of different approaches to integrate constraint
propagation and MIP.
As a result, the role of constraint propagation in the larger solving process
changes and developers are faced with different computational trade-offs.
In practice, propagation is almost always terminated before the fixed point is
reached \cite{Achterberg2009}.
In this paper, we are concerned with constraint propagation of a set of linear
constraints, where we explicitly include the presence of continuous variables
and of variables with initially unbounded domains, which frequently occur in
real-world MIP formulations.

\subsection{Bounds Propagation of Linear Constraints}  
\label{sec:dom_prop_background}
A \emph{linear constraint} can be written in the form
\begin{equation}
\label{eq:lincons}
\underline{\beta} \le \sum_{i=1}^n a_ix_i \le \overline{\beta},
\end{equation}
where $\underline{\beta} \in \Rneginf$ and $\overline{\beta} \in \Rposinf$ are left and right hand sides, respectively, and $a \in \mathbb{R}^n$ is the vector of constraint coefficients. Variables $x_i$ have lower and upper bounds $\ell_i \in \Rneginf$ and $u_i \in \Rposinf$, respectively.\footnote{When $x \in \mathbb{Z}$, then $\ell \in \mathbb{Z}_{-\infty}$ and $u \in \mathbb{Z}_{\infty}$, however, because $\mathbb{Z} \subset \mathbb{R}$, integer variables can be handled the same way as real ones. In the remainder of the paper, $\mathbb{Z}$ will be used only where necessary.} We require the following definitions:
\begin{definition}[activity bounds and residuals]
\label{actsdefinition}
Given a constraint of the form~\eqref{eq:lincons} and bounds $\ell \le x \le u$, the functions $\underline{\alpha}: \Rneginf^n, \Rposinf^n \mapsto \mathbb{R} \cup \{-\infty, \infty\}$ and $\overline{\alpha}: \Rneginf^n, \Rposinf^n \mapsto \mathbb{R} \cup \{-\infty, \infty\}$ are called the \emph{minimum} and \emph{maximum activities} of the constraint, respectively, and are defined as
\begin{subequations}
\begin{align}
\label{eq:minactivities}
& \underline{\alpha} = \underline{\alpha}(\ell,u) = \sum_{i=1}^n a_ib_i \text{ with } b_i = 
  \begin{cases}
      \ell_i & \text{if}\ a_i > 0 \\
      u_i & \text{if}\ a_i < 0
    \end{cases}, 
  \intertext{and }
\label{eq:maxactivities}
 & \overline{\alpha} = \overline{\alpha}(\ell,u) = \sum_{i=1}^n a_ib_i \text{ with } b_i = 
  \begin{cases}
      u_i & \text{if}\ a_i > 0 \\
      \ell_i & \text{if}\ a_i < 0
    \end{cases}.
\end{align}
\end{subequations}
The functions $\underline{\alpha}_j: \Rneginf^n, \Rposinf^n, \{1,\ldots,n\} \mapsto \mathbb{R} \cup \{-\infty, \infty\}$ and $\overline{\alpha}_j: \Rneginf^n, \Rposinf^n, \{1,\ldots,n\} \mapsto \mathbb{R} \cup \{-\infty, \infty\}$ are called the \emph{j-th minimum activity residual} and the \emph{j-th maximum activity residual} of the constraint, and are defined as
\begin{subequations}
\begin{align}
\label{eq:minresactivities}
& \underline{\alpha}_j = \underline{\alpha}_j(\ell,u,j) =\sum_{i=1, i \ne j}^n a_ib_i \text{ with } b_i = 
  \begin{cases}
      \ell_i & \text{if}\ a_i > 0 \\
      u_i & \text{if}\ a_i < 0
    \end{cases}, 
\intertext{and}
\label{eq:maxresactivities}
 & \overline{\alpha}_j = \overline{\alpha}_j(\ell,u,j) = \sum_{i=1, i \ne j}^n a_ib_i \text{ with } b_i = 
  \begin{cases}
      u_i & \text{if}\ a_i > 0 \\
      \ell_i & \text{if}\ a_i < 0
    \end{cases}.
\end{align}
\end{subequations}
\end{definition} 
\begin{definition}[bound candidate functions]
The functions $\Bsu: \Rneginf^n, \Rposinf^n \mapsto \mathbb{R} \cup \{-\infty, \infty\}$ and $\Bsl: \Rneginf^n, \Rposinf^n \mapsto \mathbb{R} \cup \{-\infty, \infty\}$ are called the \emph{bound candidate functions} and are defined as
\begin{subequations}
\begin{align}
& \Bsu(\ell, u) = \frac{\overline{\beta} - \underline{\alpha}_j}{a_j},
\intertext{and}
 & \Bsl(\ell, u) = \frac{\underline{\beta} - \overline{\alpha}_j}{a_j}.
\end{align}
\end{subequations}
\end{definition}

Then, the following observations are true and can be be translated into algorithmic steps, see, e.g., \cite{Achterberg2009,Harvey2003,intbdpropcomplexity}:
\begin{observation}[linear constraint propagation]
\label{obs:dom_prop_steps}
\begin{enumerate}
\item\label{dp:stepi} If $\underline{\beta} \le \underline{\alpha}$ and $\overline{\alpha} \le \overline{\beta}$, then the constraint is redundant and can be removed.
\item\label{dp:stepii} If $\underline{\alpha}>\overline{\beta}$ or $\underline{\beta}>\overline{\alpha}$, then the constraint cannot be satisfied and hence the entire (sub)problem is infeasible.
\item\label{dp:stepiii} Let $x$ satisfy \eqref{eq:lincons}, i.e., $\underline{\beta} \le \sum_{i=1}^n a_ix_i \le \overline{\beta}$, then for all $j=\{1,\ldots,n\}$ with $a_j > 0$,
\begin{subequations}
\begin{align}
\label{eq:bound_candidates_pos}
& \lnew = \Bsl(\ell, u) \le x_j \le \Bsu(\ell, u) = \unew,
\intertext{and for all $j=\{1,\ldots,n\}$ with $a_j < 0$,}
\label{eq:bound_candidates_neg}
& \lnew = \Bsu(\ell, u) \le x_j \le \Bsl(\ell, u) = \unew.
\end{align}
\end{subequations}

\item\label{dp:stepiv} For all $j \in \{1,\ldots,n\}$ such that $x_j \in \mathbb{Z}$,
\begin{equation}
\label{eq:rounding}
  \lceil \lnew_j \rceil \le x_j \le \lfloor \unew_j \rfloor
  \end{equation}
\end{enumerate}
\end{observation}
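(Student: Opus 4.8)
The plan is to derive all four parts from a single underlying fact: for every $x$ with $\ell \le x \le u$, the activity satisfies $\underline{\alpha} \le \sum_{i=1}^n a_i x_i \le \overline{\alpha}$. This is immediate from Definition~\ref{actsdefinition}, since term by term $a_i x_i$ ranges over $[a_i \ell_i, a_i u_i]$ when $a_i > 0$ and over $[a_i u_i, a_i \ell_i]$ when $a_i < 0$; summing the per-term minima (respectively maxima) reproduces exactly $\underline{\alpha}$ (respectively $\overline{\alpha}$). Applying the same argument to the sum over $i \ne j$ gives the residual version $\underline{\alpha}_j \le \sum_{i \ne j} a_i x_i \le \overline{\alpha}_j$, which is what drives Part~\ref{dp:stepiii}.

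Parts~\ref{dp:stepi} and~\ref{dp:stepii} then follow by chaining inequalities. For redundancy, if $\underline{\beta} \le \underline{\alpha}$ and $\overline{\alpha} \le \overline{\beta}$, then every $x$ in the box obeys $\underline{\beta} \le \underline{\alpha} \le \sum_i a_i x_i \le \overline{\alpha} \le \overline{\beta}$, so~\eqref{eq:lincons} holds automatically and the constraint can be dropped. For infeasibility, if $\underline{\alpha} > \overline{\beta}$ then $\sum_i a_i x_i \ge \underline{\alpha} > \overline{\beta}$ for every $x$ in the box, violating the right-hand side; the case $\underline{\beta} > \overline{\alpha}$ is symmetric.

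The core is Part~\ref{dp:stepiii}. Fixing $j$, I split the activity as $a_j x_j + \sum_{i \ne j} a_i x_i$. The right inequality in~\eqref{eq:lincons} gives $a_j x_j \le \overline{\beta} - \sum_{i \ne j} a_i x_i \le \overline{\beta} - \underline{\alpha}_j$, and the left inequality gives $a_j x_j \ge \underline{\beta} - \sum_{i \ne j} a_i x_i \ge \underline{\beta} - \overline{\alpha}_j$, where the second step in each chain uses the residual bounds from the first paragraph. Dividing by $a_j$ produces the bound candidate functions: when $a_j > 0$ the inequalities are preserved and yield $\Bsl \le x_j \le \Bsu$, which is~\eqref{eq:bound_candidates_pos}; when $a_j < 0$ the division reverses them and yields $\Bsu \le x_j \le \Bsl$, which is~\eqref{eq:bound_candidates_neg}. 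Part~\ref{dp:stepiv} is then immediate, since $x_j \in \mathbb{Z}$ together with $\lnew_j \le x_j \le \unew_j$ forces $\lceil \lnew_j \rceil \le x_j \le \lfloor \unew_j \rfloor$.

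I expect the only real obstacle to be the careful treatment of infinite bounds, since $\ell_i$ and $u_i$ range over $\Rneginf$ and $\Rposinf$ while the activities take values in $\R \cup \{-\infty, \infty\}$. Two points need checking. First, the term-by-term min/max argument must extend to the extended reals; it does, and moreover each minimum-activity summand lies in $\R \cup \{-\infty\}$ and each maximum-activity summand in $\R \cup \{\infty\}$, so $\underline{\alpha}_j$ is never $+\infty$ and $\overline{\alpha}_j$ is never $-\infty$. Second, one must confirm that the subtractions $\overline{\beta} - \underline{\alpha}_j$ and $\underline{\beta} - \overline{\alpha}_j$ never produce the indeterminate form $\infty - \infty$: this holds precisely because $\underline{\alpha}_j \ne +\infty$ and $\overline{\beta} \ne -\infty$ (and symmetrically $\overline{\alpha}_j \ne -\infty$, $\underline{\beta} \ne +\infty$), so both differences are always well defined, and when a difference is infinite the resulting candidate evaluates to $\pm\infty$ and correctly imposes no tightening. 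Verifying that these degenerate cases give vacuous rather than spurious bounds is the one place where the argument needs more than a one-line justification.
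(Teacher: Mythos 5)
Your derivation is correct; the paper itself states Observation~\ref{obs:dom_prop_steps} without proof, merely citing the standard references, and your argument is exactly the canonical one found there---bound the activity termwise to get $\underline{\alpha} \le \sum_i a_i x_i \le \overline{\alpha}$ (and its residual analogue), chain inequalities for redundancy and infeasibility, isolate $a_j x_j$ and divide by $a_j$ with the sign-dependent flip, then round for integer variables. Your closing care with the extended reals (each minimum-activity summand lies in $\R \cup \{-\infty\}$ and each maximum-activity summand in $\R \cup \{\infty\}$, so together with $\underline{\beta} \ne \infty$ and $\overline{\beta} \ne -\infty$ no indeterminate form $\infty - \infty$ can arise and infinite candidates impose only vacuous bounds) is precisely the point that makes the observation sound in the presence of unbounded domains, and you have it right.
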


If the first two steps are not applicable, the algorithm computes the new bounds $\lnew$ and $\unew$ in Steps \ref{dp:stepiii} and \ref{dp:stepiv}. For a given variable $j$, if $\lnew_j > \ell_j$, then the bound is updated with the new value. Similarly, $u_j$ is updated if $\unew_j < u_j$.

An actual implementation may skip Steps~\ref{dp:stepi} and~\ref{dp:stepii} without changing the result.
This is because for redundant constraints Steps~\ref{dp:stepiii} and \ref{dp:stepiv} correctly detect no bound
tightenings, and for infeasible constraints, Steps~\ref{dp:stepiii} and \ref{dp:stepiv} lead to at least one
variable with an empty domain, i.e., $\lnew_j > \unew_j$.

When propagating a system of the type \eqref{eq:MIP} which consists of several constraints, one simply applies the above steps to each constraint independently. Notice that in such systems, it is possible for two or more constraints to share the same variables (i.e., coefficients $a_j$ are non-zero in several constraints). Therefore, if a bound of a variable is changed in one constraint, this can trigger further bound changes in the constraints which also have this variable. This gives the propagation algorithm its iterative nature, as one has to repeat the propagation process over the constraints as long as at least one bound change is found. A pass over all the constraints is also called a \emph{propagation round}. If no bound changes are found during a given round, then no further progress is possible and the algorithm terminates. At this point, all constraints are guaranteed to be bound consistent \cite{Achterberg2009}.

This algorithm can be interpreted as a fixed-point iteration in the space of variable and activity bounds with a unique fixed point \cite{intbdpropcomplexity}; it converges to this fixed point, however not necessarily in finite time \cite{BelottiCafieriLeeLiberti2010}. Additionally, even when it does converge to the fixed point in finite time, convergence can be very slow in practice \cite{Achterberg2009,intbdpropcomplexity,ncsps_lhomme}. To deal with this, practical implementations of bounds propagation introduce tolerance-based termination criteria which stop the algorithm if the progress becomes too slow, i.e., the relative size of improvements on the bounds falls below a specified threshold. With this modification, the algorithm always terminates in finite time (but not in worst-case polynomial-time), however, it may fail to compute the best bounds possible.

To distinguish the above-described approach from alternative methods to compute consistent bounds (see, e.g., \cite{BelottiCafieriLeeLiberti2010} for a method solving a single LP instead), we will use the following definition:
\begin{definition}[Iterative Bounds Tightening Algorithm] 
\label{def:iterative_dom_prop_algorithm}
      Given variable bounds $\ell$, $u$ of a problem of the form \eqref{eq:MIP}, any algorithm updating these bounds by calculating $\lnew$, $\unew$ via \eqref{eq:bound_candidates_pos}, \eqref{eq:bound_candidates_neg}, and \eqref{eq:rounding} iteratively as described in Observation \ref{obs:dom_prop_steps}, thus traversing a sequence of bounds $(\ell, u)_1, (\ell, u)_2, \ldots$ is called an \emph{iterative bounds tightening algorithm} (IBTA).
\end{definition}
Note that this definition leaves the flexibility for individual algorithmic choices, for example, the timing of when bound changes are applied or the order in which the constraints are processed. If a given algorithm applies the found changes immediately, making them available to subsequent constraints in the same iteration, it might traverse a shorter sequence of bounds to the fixed point than the algorithm which delays updates of bounds until the end of the current iteration (e.g., because it processes constraints in parallel). The ordering of processed constraints can lead to different traversed sequences because a given bound change that depends on other changes being applied first might be missed in a given iteration if the constraint it depends on is not processed first.

\subsection{Motivation}
\label{sec:motivation}

Our motivation for this paper is threefold:

1. \textit{Estimating the premature stalling effect of IBTAs:} In the context of MINLP, Belotti et al. \cite{BelottiCafieriLeeLiberti2010} propose an alternative bounds propagation algorithm that computes the bounds at the fixed point directly by solving a single linear program. This approach circumvents non-finite convergence behavior and shows that the bounds at the fixed point can in theory be computed in polynomial time.

Nevertheless, in practice, the trade-off between the quality of obtained bounds including their effect on the wider branch-and-bound algorithm and the algorithm's runtime makes the iterative bounds propagation with tolerance-based stopping criteria the most effective method in most cases, despite its exponential worst-case runtime. The use of stopping criteria still leaves individual instances or potentially even instance classes susceptible to the following effect stated by Belotti et al. as a motivation for their LP-based approach, which is also the motivation for our paper: \emph{``However, because the improvements are not guaranteed to be monotonically non-increasing, terminating the procedure after one or perhaps several small improvements might in principle overlook the possibility of a larger improvement later on.''} In their paper, no attempt is made to quantify this statement, as likely out-of-scope and non-trivial to answer. 

In this work, we aim to develop a methodology to quantify the overall progress that a given IBTA achieved up to a given point in its execution. Ideally, we would like to have a function $f$, which maps current variable bounds to a scalar value, for example in $[0, 100]$, which measures the achieved progress. The main difficulty in developing such a function comes in the form of unbounded variable domains in the input instances (and potentially during the algorithm's execution). Observing the values of such a function over the execution time of the algorithm could then be used to study the behavior of IBTAs on instances of interest and quantify the effect brought up by Belotti et al., which we call \emph{premature stalling} (see Section \ref{sec:results_stalling} for formal definition). Furthermore, an algorithm-independent $f$ would allow comparing the behavior of different IBTAs with respect to premature stalling.

2. \textit{Performance comparison of different IBTAs in practice:} As already motivated by Definition \ref{def:iterative_dom_prop_algorithm}, different IBTAs might traverse different sequences of bounds from the initial values to the fixed point. Additionally, we stated in Section \ref{sec:dom_prop_background} that in practice, iterative bounds propagation is used exclusively with tolerance-based stopping criteria, meaning that the algorithm is stopped potentially before reaching the fixed point. The following problem then arises: for two such algorithms traversing different sequences of bounds that are stopped before reaching the unique fixed point, how do we judge which one performed better? Perhaps a more natural way to formulate this question is: \emph{in how much time do the two algorithms achieve the same amount of progress?} A function measuring the progress of iterative bounds propagation as already proposed can be used to answer this question.

As a concrete example, we will compare the following two IBTAs: the canonical, state-of-the-art sequential implementation, for example from \cite{Achterberg2009}, and a GPU-parallel algorithm recently proposed in \cite{SofranacGleixnerPokutta2020}. In the preliminary computational study on the MIPLIB 2017 test set \cite{GleixnerEtal2019} presented in \cite{SofranacGleixnerPokutta2020}, the two algorithms are compared for the propagation to the fixed point (no tolerance-based stopping criteria). In this work, we will compare the performance of the two algorithms in a real-world setting, i.e., when terminated before reaching the fixed point.

3. \textit{Designing stopping criteria:} as already stated, the tolerance-based stopping criteria are crucial for effective IBTAs. Notice that because different IBTAs might traverse different sequences of bounds, their average individual improvements on the bounds might be different in size. In fact, the study in \cite{SofranacGleixnerPokutta2020} shows that on average, the size of improvements by the GPU-parallel algorithm is smaller than that of the canonical sequential implementation over the MIPLIB 2017 test set, despite its higher performance in terms of runtime to the fixed point. An important implication of this effect is that given two such algorithms, a given stopping criterion might be effective for one of them, but ineffective for the other. In this context, quantifying the magnitude and distribution of expected improvements of a given algorithm for a given problem class and its likelihood to prematurely stall, would allow one to make more informed decisions when designing \emph{effective} stopping criteria.

Lastly, we believe that gaining insight into the behavior of these algorithms is a motivation in itself that could potentially benefit future and existing methodologies in the context of linear constraint propagation.                   

\section{Finite and infinite domain reductions}
\label{sec:finite_and_infinite_domain_reductions}

Any IBTA starts with arrays of initial lower and upper bounds, $\ell^s \in \Rneginf^n$ and $u^s \in \Rposinf^n$, respectively, and incrementally updates individual bounds towards the uniquely defined fixed-point bounds which we denote by $\ell^l$ and $u^l$. To denote the arrays of bounds at any given time between the start and the fixed point we simply use $\ell$ and $u$ and call them \emph{current bounds}. Obviously, it holds that $\ell^s_j \le \ell_j \le \ell^l_j$ and $u^s_j \ge u_j \ge u^l_j$ for all $j \in \{1,\ldots,n\}$. Observe that both initial and limit bounds may contain infinite values.

\subsection{Reducing Infinite Bounds to Finite Values}
\label{sec:inf_phase}

Variables that start with infinite value in either lower or upper bound, will either remain infinite if no bound change is possible or will become finite values. We start with the following simple observation:
\begin{observation}
\label{prop:no_val_dependence}  
Given a constraint of the form \eqref{eq:lincons} and a given variable $j \in \{ 1,\ldots,n \}$ with a bound $\ell_j = -\infty$ (or $u_j = \infty$), the possibility of tightening this bound to some finite value depends on the signs of coefficients $a_j, j \in \{1,\ldots,n\}$, the finiteness of variable bounds $\ell_j, j \in \{1,\ldots,n\} \setminus j$ and $u_j, j \in \{1,\ldots,n\} \setminus j$, and the finiteness of $\underline{\beta}$ and $\overline{\beta}$, but not on the values that these variables take, if they are finite.
\end{observation}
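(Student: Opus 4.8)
The plan is to prove Observation~\ref{prop:no_val_dependence} by directly inspecting the formula that governs whether an infinite bound can be tightened, namely the bound candidate functions from Definition~2, and showing that the \emph{finiteness} of the resulting candidate value is a function only of the qualitative data (signs of coefficients, finiteness of the other bounds, finiteness of $\underline{\beta}$ and $\overline{\beta}$) and never of the particular finite values involved. Without loss of generality I would treat the case $\ell_j = -\infty$ with $a_j > 0$; by the symmetry between \eqref{eq:bound_candidates_pos} and \eqref{eq:bound_candidates_neg} and between the two activity definitions, the other three cases ($\ell_j=-\infty$ with $a_j<0$, and $u_j=\infty$ with either sign) follow by an identical argument with the roles of $\underline{\beta},\overline{\beta}$ and of $\overline{\alpha}_j,\underline{\alpha}_j$ interchanged.

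First I would recall that for $a_j>0$ the relevant candidate is $\lnew_j = \Bsl(\ell,u) = (\underline{\beta} - \overline{\alpha}_j)/a_j$, and that the bound $\ell_j=-\infty$ is tightened to a finite value precisely when this expression evaluates to something strictly greater than $-\infty$ and finite. The key structural fact is that $a_j$ is a fixed nonzero constant, so the finiteness of the quotient is determined entirely by the finiteness of the numerator $\underline{\beta} - \overline{\alpha}_j$. Hence the whole question reduces to: when is $\overline{\alpha}_j$ finite?

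Next I would analyze $\overline{\alpha}_j = \sum_{i \ne j} a_i b_i$ as given in \eqref{eq:maxresactivities}, where each $b_i$ equals either $u_i$ or $\ell_i$ according to $\mathrm{sign}(a_i)$. A finite sum of terms of the form $a_i b_i$ (with $a_i$ finite) is finite if and only if every contributing bound $b_i$ is finite, and it diverges to $+\infty$ as soon as any selected $b_i$ is infinite in the appropriate direction. Crucially, \emph{which} bound $b_i$ is selected depends only on $\mathrm{sign}(a_i)$, and \emph{whether} that selected bound is finite is exactly the finiteness hypothesis on $\{\ell_i,u_i\}_{i\ne j}$; the actual numerical value of a finite $b_i$ only shifts $\overline{\alpha}_j$ within $\mathbb{R}$ but can never move it between finite and infinite. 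Combining this with the finiteness of $\underline{\beta}$, I obtain that $\underline{\beta}-\overline{\alpha}_j$, and therefore $\lnew_j$, is finite if and only if $\underline{\beta}$ is finite and every $b_i$ selected by the sign pattern is finite --- a condition phrased purely in terms of signs and finiteness, with no dependence on the finite values themselves.

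I do not expect a serious obstacle here, since the statement is essentially a bookkeeping fact about extended-real arithmetic; the only care needed is to rule out an indeterminate $\infty - \infty$ in the numerator. This cannot arise in our direction: $\overline{\alpha}_j$ can only diverge to $+\infty$ (its infinite contributions all push it up), and $\underline{\beta} \in \Rneginf$ can only be $-\infty$ or finite, so $\underline{\beta} - \overline{\alpha}_j$ is either genuinely finite or unambiguously $-\infty$, never indeterminate. I would state this sign-consistency check explicitly as the one nontrivial point, then conclude by noting that the tightening occurs exactly when the candidate is finite and strictly improves on $-\infty$, which by the above depends only on the qualitative data asserted in the statement.
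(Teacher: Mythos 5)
Your proposal is correct and follows essentially the same route as the paper's proof: both inspect the bound candidate formula $(\underline{\beta}-\overline{\alpha}_j)/a_j$ and argue via extended-real arithmetic that the finiteness of the candidate is determined solely by the signs of the coefficients and the finiteness of the other variable bounds and of $\underline{\beta},\overline{\beta}$, never by the particular finite values involved. Your explicit check that no indeterminate $\infty-\infty$ can arise is a welcome addition that the paper leaves implicit; the only detail you omit that the paper includes is the remark that integer rounding via $\lceil\cdot\rceil$ and $\lfloor\cdot\rfloor$ preserves finiteness and therefore does not affect the conclusion.
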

\begin{proof}
To see the dependence on the sign of coefficients $a$, let the lower bound of a given variable $j$ be $\ell_j = -\infty$  and let $\underline{\beta}$ and $\overline{\alpha}_j$ be finite, $\overline{\beta} = \infty$ and $\underline{\alpha}_j = -\infty$. Then, by \eqref{eq:bound_candidates_pos} and \eqref{eq:bound_candidates_neg}, $a_j > 0$ implies $\ell^{\text{new}}_j \in \mathbb{R} > -\infty$ and the bound is updated. Else, if $a_j < 0$, then $\ell^{\text{new}}_j = -\infty$ and no bound change is possible.

The dependence on the finiteness of $\underline{\beta}$ and $\overline{\beta}$ is trivial, while the coefficients $a$ are finite by problem definition. To see the dependence on the finiteness of variable bounds, consider the activities $\underline{\alpha}_j$ and $\overline{\alpha}_j$ of a variable $j$ with $\ell_j=-\infty$, $u_j=\infty$, and $a_i>0$ for all $i \in \{1,\ldots,n\}$. If there exists $k$ such that $u_k = \infty, k \in \{1,\ldots,n\} \setminus j$ then $\overline{\alpha} = \infty$ and consequently $\ell_j$ cannot be tightened. Otherwise, if $u_k \in \mathbb{R}$ for all $k \in \{1,\ldots,n\} \setminus j$, then $\overline{\alpha} \in \mathbb{R}$ and a bound tightening is possible.

The specific finite values that the variables in \eqref{eq:bound_candidates_pos} and \eqref{eq:bound_candidates_neg} take have no effect on the possibility to reduce an infinite bound to a finite value because arithmetic operations between finite values again produce a finite value (also $a_j \ne 0$ by definition) and $-\infty < k < \infty$ for all $k \in \mathbb{R}$. Variables which are restricted to integer values also do not affect this process, as the operations $\lceil \ell_j \rceil$ and $\lfloor u_j \rfloor$ give $\ell_j, u_j \in \mathbb{Z}$ and $\mathbb{Z} \subset \mathbb{R}$. The same argument from above then applies.        
\end{proof}

Notice that the same effect of finite bound changes triggering new bound changes in the subsequent propagation rounds is also true for infinite domain reductions, hence this process might also require more than one iteration. Furthermore, these iterations have the following property:

\begin{corollary}
\label{prop:n_rounds_of_inf_phase}
Let $k \in \mathbb{N}$ be the number of iterations a given IBTA takes to reach the fixed point. Then there is a number $c \leq k$, $c \in \mathbb{N}_0$ such that the first $c$ propagation rounds have at least 1 reduction of an infinite to a finite bound, and none thereafter. By pigeonhole principle, $c$ is at most the number of initially infinite bounds.
\end{corollary}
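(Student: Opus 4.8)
The plan is to track, over the course of the IBTA, only the \emph{finiteness pattern} of the bounds, i.e., the subset $F \subseteq \{1,\ldots,n\} \times \{\ell, u\}$ of bounds that are currently finite, and to forget the actual finite values entirely. Since an IBTA only tightens bounds (Section \ref{sec:dom_prop_background}) and a finite bound can never revert to an infinite one, this pattern is monotonically non-decreasing along the traversed sequence $(\ell, u)_1, (\ell, u)_2, \ldots$. The engine of the whole argument is Observation \ref{prop:no_val_dependence}: whether any given infinite bound can be reduced to a finite value is determined entirely by $F$ together with data that never changes (the signs of the $a_i$ and the finiteness of $\underline{\beta}, \overline{\beta}$), and not by the particular finite values. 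Moreover, inspecting the candidate formulas shows that this reducibility is \emph{monotone} in $F$: enlarging the set of finite bounds can only make an infinite residual $\overline{\alpha}_j$ or $\underline{\alpha}_j$ finite, hence can only turn a non-reducible infinite bound into a reducible one.

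First I would prove the central equivalence: a propagation round produces at least one infinite-to-finite reduction if and only if at least one infinite bound is reducible under the finiteness pattern $P$ present at the \emph{start} of that round. The contrapositive of the nontrivial implication is where care is needed for IBTAs that apply bound changes immediately within a round. I would argue that, before the first infinite reduction occurs in a round, all intermediate bound changes are merely tightenings of already-finite bounds; by Observation \ref{prop:no_val_dependence} these cannot alter the reducibility of any infinite bound, so the pattern stays equal to $P$. Consequently, if some infinite bound is reducible under $P$, it is still reducible (by monotonicity it cannot become non-reducible) when its constraint is processed during the round---which happens since a round is a full pass over all constraints---so a reduction must occur; conversely, if none is reducible under $P$, the pattern never changes and no reduction can ever be triggered.

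Given the equivalence, contiguity is immediate. If round $t$ has no infinite reduction, then no bound became finite during it, so the finiteness pattern entering round $t+1$ equals $P$, the pattern entering round $t$; the equivalence then forces round $t+1$ to have no reduction either, and by induction neither does any later round. Hence the rounds carrying an infinite reduction form a prefix $\{1,\ldots,c\}$, where $c$ is the index of the last such round (and $c=0$ if there is none); by construction each of these $c$ rounds has at least one such reduction and none occur afterward, and $c \le k$ holds trivially since $c$ is a round index bounded by the total number of rounds $k$.

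Finally, the pigeonhole bound is the easy step: each of the first $c$ rounds converts at least one bound from infinite to finite, and since finite bounds never revert, these reductions eliminate $c$ \emph{distinct} members of the initial set of infinite bounds; thus $c$ cannot exceed the number of initially infinite bounds. The only genuinely delicate point in the whole argument is the within-round reasoning for the immediate-update variant, which is precisely where Observation \ref{prop:no_val_dependence}---separating the finiteness structure from the finite values---does the decisive work.
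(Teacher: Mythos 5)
Your proof is correct and follows essentially the same route as the paper's: both reduce the question to the finiteness pattern of the bounds via Observation~\ref{prop:no_val_dependence}, observe that all other relevant data is constant, and conclude that a round with no infinite-to-finite reduction leaves the pattern unchanged so that none can occur thereafter (with finite-to-infinite reversals excluded because bounds only improve). Your version merely spells out the monotonicity and the within-round bookkeeping for immediate-update IBTAs that the paper's short argument leaves implicit.
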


\begin{proof}
The coefficients $a$ and the left- and right-hand sides $\underline{\beta}$ and $\overline{\beta}$ are constants that do not change during the course of the algorithm. By Observation \ref{prop:no_val_dependence} the only thing left influencing the infinity reductions is the finiteness of variable bounds. If no infinite to finite reductions are made at any given round, then none can be made thereafter. Finite to infinite reductions are not possible as the algorithm only accepts improving bounds.
\end{proof}

In conclusion, the process of reducing infinite bounds to some finite values is independent and fundamentally different from the incremental improvements of finite values thereafter, which is driven by the values of the variables in \eqref{eq:bound_candidates_neg} and \eqref{eq:bound_candidates_pos}. Accordingly, we will measure the ability of an algorithm to reduce the infinite bounds to some finite values separately from its ability to make improvements on the finite values of the bounds thereafter.

\subsection{Finite Domain Reductions}
\label{sec:finite_phase}

Our main approach in measuring the progress of finite domain reductions (see Section \ref{sec:measuring_finite_phase}) relies on the observation that the starting as well as the fixed point of propagation is uniquely defined for a given MIP problem and hence independent from the algorithm used. The measuring function then answers the following question: for given bounds $\ell$ and $u$ at some time during the propagation process, how far have we gotten from the starting point $\ell^s$ and $u^s$, relative to the endpoint $\ell^l$ and $u^l$. When the bounds of a given variable did not change during the propagation process, or they are finite at both the start and the end, there is no difficulty in calculating such a measure. However, when a given variable bound started as an infinite value but was tightened to some finite value by the end of propagation, special care is needed to handle this case, which we address in this section.

In Section \ref{sec:background_and_motivation}, we discussed how a sequential and a parallel propagation algorithm might traverse different sequences of bounds during their executions. Let us consider the first round of two such algorithms, and see what might happen to the bounds which start as infinite but are tightened during the course of the algorithm. When the sequential algorithm finds a bound change, it is immediately made available to the subsequent constraints in the same round. Consequently, if an infinite domain reduction happens in the subsequent constraints, it may produce a stronger finite value compared to the parallel algorithm which used the older (weaker) bound information. This serves to show that the first finite values that such bounds take may not be the same in different IBTAs. Hence they cannot be used safely to compare finite domain reductions across different implementations. In what follows, we construct a procedure to compute algorithm-independent reference values for each bound.

\begin{definition}[weakest variable bounds]
\label{def:weakest_bounds}
Given an optimization problem of the form \eqref{eq:MIP} with starting variable bounds $\ell^s$ and~$u^s$, we call $\overline{\ell}_j$
\emph{weakest lower bound} of variable $j$ if
\begin{itemize}
\item $\lw_j=-\infty$ and no IBTA can produce a finite lower bound $\ell_j\in\R$, or
\item $\lw_j\in\R$ and no IBTA can produce a finite lower bound $\ell_j\in\R$ with $\ell_j < \lw_j$.
\end{itemize}
We call $\overline{u}_j$ \emph{weakest upper bound} of variable $j$ if
\begin{itemize}
\item $\uw_j=\infty$ and no IBTA can produce a finite upper bound $u_j\in\R$, or
\item $\uw_j\in\R$ and no IBTA can produce a finite upper bound $u_j\in\R$ with $u_j > \uw_j$.
\end{itemize}
\end{definition}

When both the starting and the limit bounds are finite we have $\lw_j = \ell_j^s$ resp.~$\uw_j = u_j^s$ (because all IBTAs only accept improving bounds) and when they are both infinite we have $\ell^s_j = \ell_j^l = \lw_j = -\infty$ resp.~$u_j^s = u_j^l = \uw_j = \infty$. Notice that the cases of $\ell_j^s \in \R$ with $\ell_j^l = -\infty$ and $u_j^s \in \R$ with $u_j^l = \infty$ are not possible as $\ell^s_j \le \ell^l_j$ and $u^s_j \ge u^l_j$. The main challenge in computing $\lw$ and $\uw$ is due to the remaining case of $\ell_j^s = -\infty, \ell^l_j \in \R$ resp.~$u_j^s = \infty, u_j^l \in \R$. In what follows, we will extend the notation introduced in Section \ref{sec:dom_prop_background} with $\Bijsl$ and $\Bijsu$ denoting $\Bsl$ and $\Bsu$ applied to constraint $i$ and variable $j$, respectively. The procedure presented in Algorithm \ref{alg:weakest_bounds} computes $\lw$ and $\uw$.

\begin{algorithm}[htb]
\caption{The Weakest Bounds Algorithm}
\label{alg:weakest_bounds}
\begin{algorithmic}[1]
\REQUIRE System of $m$ linear constraints $\underline{\beta} \le \sum_{i=1}^n a_ix_i \le \overline{\beta}$, $\ell \le x \le u$ 
\ENSURE Weakest variable bounds $\lw$ and $\uw$

\STATE mark all constraints \label{markallcons}
\STATE \code{bound\_change\_found} $\gets$ \code{true}
\STATE $\lw = \ell, \uw = u$

\WHILE{\code{bound\_change\_found}}

\STATE \code{bound\_change\_found} $\gets$ \code{false}

\FOR{\textbf{each} constraint $i$} \label{consloop}
\IF{$i$ marked} \label{ifconsmarked}
\STATE unmark $i$ \label{unmarkcons}
\FOR{\textbf{each} variable $j$ such that $a_{ij} \ne 0$}

\IF{$a_{ij} > 0$}
\STATE $\lnew_j = \Bijsl(\lw, \uw)$
\STATE $\unew_j = \Bijsu(\lw, \uw)$
\ELSE
\STATE $\lnew_j = \Bijsu(\lw, \uw)$
\STATE $\unew_j = \Bijsl(\lw, \uw)$
\ENDIF

\IF{$x_j \in \mathbb{Z}$}
\STATE $\lnew = \lceil \lnew_j \rceil, \unew_j = \lfloor \unew_j \rfloor$
\ENDIF

\IF{$\ell_j = -\infty$ and $\lnew_j \in \R$ and $(\lw_j = -\infty$ or $(\lw_j \in \R$ and $\lnew_j < \lw_j))$ } \label{alg:weakest_l_check}
\STATE $\lw_j \gets \lnew_j$
\STATE \code{bound\_change\_found} $\gets$ \code{true}
\ENDIF

\IF{$u_j = \infty$ and $\unew_j \in \R$ and $(\uw_j = \infty$ or $(\uw_j \in \R$ and $\unew_j > \uw_j))$} \label{alg:weakest_u_check}
\STATE $\uw_j \gets \unew_j$
\STATE \code{bound\_change\_found} $\gets$ \code{true}
\ENDIF

\IF{\code{bound\_change\_found}} \label{ifbdchgfound}
\STATE mark all constraints $k$ such that $a_{kj} \ne 0$ \label{markconswithvar}
\ENDIF

\ENDFOR
\ENDIF
\ENDFOR
\ENDWHILE

\RETURN $\lw$, $\uw$
\end{algorithmic}
\end{algorithm}

The procedure starts by setting $\lw = \ell^s$ and $\uw = u^s$ and will proceed to iteratively update these bounds until they are all weakest bounds. Up to Lines \ref{alg:weakest_l_check} and \ref{alg:weakest_u_check}, the procedure is very similar to the usual bounds propagation: it evaluates \eqref{eq:bound_candidates_pos}, \eqref{eq:bound_candidates_neg}, and \eqref{eq:rounding} on the latest available bounds for all constraints and variables. As the bounds which start as finite values are already weakest by definition, the first part of the checks in Lines \ref{alg:weakest_l_check} and \ref{alg:weakest_u_check} makes sure that these variables are not considered. For bounds that are infinite at the start, the algorithm checks if the new candidate is finite. If so, the new candidate becomes the weakest bound incumbent if the current weakest bound is infinite, or the new candidate is weaker than the current one. This process then repeats in iterations until no further weakenings are possible. Notice that the \emph{constraint marking mechanism}, implemented in Lines \ref{markallcons}, \ref{ifconsmarked}, \ref{unmarkcons}, \ref{ifbdchgfound}, and \ref{markconswithvar} is not necessary for the correctness of the weakest bounds procedure, but as it can substantially speed up the execution of the algorithm, we include it in the pseudocode.

\section{An Algorithm-Independent Measure of Progress}  
 \label{sec:measuring_function}

 As pointed out in Section \ref{sec:inf_phase}, we will measure the ability of an IBTA to reduce infinite bounds to some finite values separately from the improvements of finite bounds. Section \ref{sec:measuring_infinite_phase} presents the functions measuring infinite domain reductions, while Section \ref{sec:measuring_finite_phase} presents the functions measuring the progress in finite domain reductions.

 As before, we denote the starting bounds of a variable $j$ as $\ell^s_j$ and $u^s_j$, the weakest bounds as $\lw_j$ and $\uw_j$, the limit bounds as $\ell^l_j$ and $u^l_j$, and the bounds at a given point in time during the propagation as $\ell_j$ and $u_j$. Recall that the following relations hold: $\ell^s_j \le \ell_j \le \ell^l_j$ and $u^s_j \ge u_j \ge u^l_j$ for all $j \in \{1,\ldots,n\}$. Additionally, if $\ell_j \in \R$, then $\ell^l_j \in \R$ and $\ell^s_j \le \lw_j \le \ell_j \le \ell^l_j$. Likewise, if $u_j \in \R$ then $u^l_j \in \R$ and $u^s_j \ge \uw_j \ge u_j \ge u^l_j$. Lastly, if $\ell_j^s \in \R$ then $\lw_j = \ell^s_j$ and if $u^s_j \in \R$ then $\uw_j = u^s_j$. Figure \ref{fig:bounds_on_real_line} illustrates example starting, current, and limit bounds of a given variable on the real line. 
\begin{figure}
  \centerline{\includegraphics[width=0.75\textwidth]{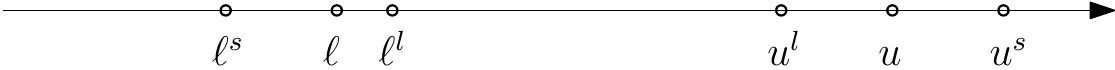}}
\caption{Schematic representation of the starting (index $s$), current (no index) and limit bounds (index $l$) for a given variable on the real line. In this example, $\ell^s = \lw \in \R$ and $u^s=\uw \in \R$.}
\label{fig:bounds_on_real_line}
\end{figure}
\subsection{Measuring Progress in Infinite Domain Reductions}
\label{sec:measuring_infinite_phase}

As bounds propagation has a unique fixed point to which it converges, we know the state of the algorithm at both the beginning and the end (a given bound is either finite or infinite). Denote by $\ntotal \in \mathbb{N}$ the total number of bounds that change from an infinite to some finite value between the starting and the limit bounds of the problem, and by $\ncurrent \in \mathbb{N} \le \ntotal$ the number of infinite bounds reduced to finite values by a given IBTA at a given point during its execution:
\begin{subequations}
\begin{align}
& \ntotal = |\{ j=1,\ldots,n : \ell^s_j = -\infty, \ell^l_j \in \R \}| + |\{ j=1,\ldots,n : u^s_j = \infty, u^l_j \in \R \}|,
\intertext{and, }
& \ncurrent = |\{ j=1,\ldots,n : \ell^s_j = -\infty, \ell_j \in \R \}| + |\{ j=1,\ldots,n : u^s_j = \infty, u_j \in \R \}|.
\end{align}
\end{subequations}
\noindent
Then, the progress in infinite domain reductions of the IBTA at that point is calculated as:
\begin{equation}
\Pinfinite = \frac{\ncurrent}{\ntotal}, \ntotal \ne 0.
\end{equation}
Observe that the total number of infinite domain reductions $\ntotal$ is algorithm-independent and can be precomputed from the starting and the limit bounds. Because IBTAs never relax bounds, $\Pinfinite$ is trivially non-decreasing.
  
\subsection{Measuring Progress in Finite Domain Reductions}
\label{sec:measuring_finite_phase}

The concept of the weakest variable bounds developed in Section \ref{sec:finite_phase} gives us a natural starting point for finite domain reductions. As bounds propagation converges towards its unique fixed point, the endpoint is also well defined. Notice that the bounds which are infinite at the endpoint, also had to be infinite at the starting point, meaning that no change was made on this bound. The rest of the bounds are either infinite at the beginning, in which case we can compute the weakest bound by Algorithm \ref{alg:weakest_bounds}, or the bound is finite at both the start and the end.

Our main approach is to measure the relative progress of each individual bound from its weakest value towards the limit value. Given a variable $j \in \{1,\ldots,n\}$ we will denote by $\Prog_{\ell_j} \in \R$ and $\Prog_{u_j} \in \R$ the scores which measure the amount of progress made on its lower and upper bounds $\ell_j$ and $u_j$, respectively, at a given point in time. Afterward, we will combine the scores of all the variable bounds into the global progress in the form of a single scalar value $\Pfinite \in \R$, which measures the global progress in finite domain reductions at a given point in time.

For variable~$j$, $\Prog_{\ell_j}$ and $\Prog_{u_j}$ are computed as
\begin{subequations}
\begin{align}
\label{P_l_j}
& \Prog_{\ell_j} =
  \begin{cases}
      \frac{\ell_j - \lw_j}{\ell^l_j - \lw_j} & \text{ if } \ell_j > \lw_j \text{ and } \lw_j \ne \ell^l_j \\
      0 & \text{ otherwise}
    \end{cases},
\intertext{and}
\label{P_u_j}
 & \Prog_{u_j} =
   \begin{cases}
      \frac{\uw_j - u_j}{\uw_j - u^l_j} & \text{ if } u_j < \uw_j \text{ and } \uw_j \ne u^l_j \\
      0 & \text{ otherwise}
    \end{cases}.
\end{align}
\end{subequations}
Given the vectors of scores for individual bounds $\Prog_{\ell} \in \R^n$ and $\Prog_u \in \R^n$, we calculate $\Pfinite$ as
\begin{equation}
\Pfinite = \lVert \Prog_\ell \lVert_1 + \lVert \Prog_u \lVert_1 = \sum_j (\Prog_{\ell_j} + \Prog_{u_j}),
\end{equation} 
where $\lVert \cdot \lVert_1$ denotes the $\ell_1$ norm. It holds that $\Prog_{\ell_j}, \Prog_{u_j} \in [0, 1]$ and
\begin{equation}
  \Pfinite \leq |\{ j=1,\ldots,n : \lw_j \ne \ell^l_j \}| + |\{ j=1,\ldots,n : \uw_j \ne u^l_j \}|.
\end{equation}
This maximum score is algorithm-independent and can be precomputed for each instance. This makes it possible to normalize the maximum score to, e.g., $100\%$. Again, because IBTAs never relax bounds, this progress function is trivially non-decreasing.

\subsection{Implementation Details}
\label{sec:implementation}

To precompute $\lw$ and $\uw$, we implemented Algorithm \ref{alg:weakest_bounds}. To obtain $\ell^l$ and $u^l$, any correct bounds propagation algorithm can be run on the original problem, assuming that it propagates the problem to the fixed point (no tolerance-based stopping criteria).

Computing the progress measure is expensive relative to the amount of work that bounds propagation normally performs. Hence, it can considerably slow down the execution and incur unrealistic runtime measurements. To avoid this effect, we proceed as follows in our implementation. First, we run the bounds propagation algorithm together with progress measure computation and record the scores after each round. Then, we run the same bounds propagation algorithm but without the progress measure calculation and record the time elapsed to the end of each round. This gives us progress scores and times for each round, but also the time it took to reach the scores at the end of each round.

\section{Applications of the Progress Measure}
\label{sec:applications}

In this section, we apply the progress measure in order to answer two questions of practical relevance. In Section \ref{sec:experimental_setup}, we first describe the experimental setup that will form the base for subsequent evaluations. In Section \ref{sec:results_stalling} we show that MIP instances in practice rarely cause IBTAs to stall prematurely, i.e., have very slow progress followed by larger improvements thereafter, a concern brought up in \cite{BelottiCafieriLeeLiberti2010} (see Section \ref{sec:motivation}). In Section \ref{sec:results_gpu_dom_prop}, we show that the newly-developed GPU-based propagation algorithm from \cite{SofranacGleixnerPokutta2020} is even more competitive in a practical setting than reported in the original paper.

\subsection{Experimental Setup}
\label{sec:experimental_setup}

We will refer to two linear constraint propagation algorithms: 
\begin{enumerate}
\item \textbf{\gpu} is the GPU-based algorithm from \cite{SofranacGleixnerPokutta2020}, and
\item \textbf{\seq} is the canonical sequential propagation as described in e.g. \cite{Achterberg2009}. Our implementation closely follows the implementation in the academic solver SCIP \cite{GamrathEtal2020OO}.
\end{enumerate}
We use the MIPLIB~2017 test set, which is currently the most adopted and widely used testbed of MIP instances \cite{GleixnerEtal2019}. This test set contains \num{1065} instances, however, the open-source MIP file reader we used had problems with reading \num{133} instances, leaving the test set at \num{932} instances. On \num{72} instances \gpu and \seq failed to obtain the same fixed point (due to e.g., numerical difficulties and other problems), and we remove these instances from the test set as well. Additionally, we impose an iteration limit of \num{100} for both propagation algorithms, with \num{2} instances hitting this limit.

During MIP solving, the case where no bound changes are found during propagation is valid and common. However, this is of no interest to us here, as we could make no measurements of progress. There are \num{310} such instances in the test set. Furthermore, \num{8} instances with challenging numerical properties showed inconsistent behavior with our implementations, and we remove these instances from the test set as well. Finally, the test set used for the evaluations is left with \num{540} MIP instances.

In terms of hardware, we execute the \gpu algorithm on a NVIDIA Tesla V100 PCIe 32GB GPU, and the \seq algorithm on a 24-core Intel Xeon Gold 6246 @ 3.30GHz with 384 GB RAM CPU. All executions are performed with double-precision arithmetic.

As we use this test set to measure the progress of propagation algorithms, they were run until the fixed point is reached with the progress recorded as described in Section \ref{sec:implementation}. In this setting, IBTAs terminate after no bound changes are found at a given propagation round. What this means is that the last two rounds will both have the same maximum score (no bound changes in the last round). Because this feature reflects the design of the algorithms, in the results we assume that the maximum score is reached after the last round, and not after the second-to-last round. This is equivalent to removing the second-to-last round. On the other hand, when the (finite or infinite) score does not change its value between two rounds which are not the last and the second-to-last one, we assume that the score is reached at the first time when it is recorded.

Due to implementation reasons, we will sample progress after each propagation round of an algorithm, rather than after every single bound change. Then, we use linear interpolation to build the progress functions $\Pfinite$ and $\Pinfinite$ and thus obtain an approximation of the true progress function.

\subsection{Analyzing Premature Stalling in Linear Constraint Propagation}
\label{sec:results_stalling}

First, we have to quantitatively define the premature stalling effect. The danger it poses is that the stopping criteria might terminate the algorithm after an iteration with slow progress, and potentially miss on substantial improvements later on. While infinite domain reductions are usually easy to find by bounds propagation algorithms, they are nevertheless considered significant and the algorithm is usually not stopped after an iteration that contains these tightenings \cite{Achterberg2009}. Accordingly, we will reflect this in our premature stalling effect definition.

We slightly adapt the notation introduced in Section \ref{sec:measuring_finite_phase} and define the progress in finite domain reductions as a function of time denoted by $\Prog: [0, 100] \rightarrow [0, 100]$. Observe that the input (time) and output (progress) of this function are normalized to values between \num{0} and \num{100}. In this notation we assume that $\Prog$ is continuous and twice differentiable, however, in practice, the progress is sampled only after each propagation round and $\Prog$ built by linear interpolation. In our implementation, we approximate the derivatives of $\Prog$ by second-order accurate central differences in the interior points and either first or second-order accurate one-sided (forward or backward) differences at the boundaries \cite{quarteroni2007numerical,Fornberg1988GenerationOF}. Additionally, given a propagation round $r$, $t(r)$ denotes the normalized time at the end of propagation round $r$. All derivates are w.r.t. time: $\Prog' = \frac{d}{dt}\Prog$. We denote by $k \in \mathbb{N}$ the number of iterations the propagation algorithm takes to reach the fixed point and by $\ell^r, u^r \in \R^n$ the arrays of lower and upper bounds at iteration $r$, respectively. Then, the premature stalling effect is defined as follows.
\begin{definition}
\label{def:stalling}
Let \Prog be a progress function of finite domain reductions for the propagation of a given MIP instance. Then, the propagation algorithm is said to \emph{prematurely stall} with coefficients $p,q \in \R_{\infty \ge 0}$ at round $r \in \{2,\ldots,k\}$ if the following conditions are true:
\begin{enumerate}
  \item there does not exist $j \in \{1,\ldots,n\}$ such that $\ell^{r-1}_j = -\infty$ and $\ell^{r}_j \in \R$,
  \item there does not exist $j \in \{1,\ldots,n\}$ such that $u^{r-1}_j = \infty$ and $u^{r}_j \in \R$,
  \item $\Prog'(t(r)) < p $, and
  \item there exists $x \in [t(r), 100]$ such that $\Prog''(x) > q$.
  \end{enumerate}
\end{definition}

The first two conditions simply state that there were no infinite domain reductions in round $r$. To understand the third condition, let $p = 0.1$ at $r$. This would mean that the algorithm is progressing at a rate of \num{1} percent of progress in \num{10} percent of the time at $r$ (recall the normalized domains of \Prog). Taking another derivative and looking at the remainder of the time interval reveals if this rate will increase (is greater than \num{0}), meaning that there are bigger improvements to follow than the improvements the algorithm is currently making. The parameter $q \ge 0$ allows quantification of increase in size of these improvements. Also, recall from Section \ref{sec:measuring_finite_phase} that $\Prog$ is non-decreasing and hence $\Prog'(t) \ge 0$ for all $t \in [0, 100]$. With this, we can now detect instances where slow progress is followed by a significant increase in improvements.

Table \ref{tab:table} reports the number of premature stalls in the test set for several different combinations of parameters $p$ and $q$. Notice that the \num{310} instances for which no bound changes are found cannot stall by definition. Additionally, \num{57} instances in the test set only recorded infinite domain reductions, and these instances also cannot prematurely stall by definition. The results of testing the remaining \num{432} instances which do record at least one finite domain reduction for premature stalling are shown in Table \ref{tab:table}.

\begin{table}[ht!]
  \begin{center}
    \caption{Number of premature stalls in the test for different values of parameters $p$ and $q$.}
    \label{tab:table}
    \begin{tabular}{cccc}
      \toprule
                 &            & \multicolumn{2}{c}{\textbf{\# stalls}}\\
      \textbf{p} & \textbf{q} & \seq & \gpu\\
      \midrule
      $\infty$ & 0.0 & 48 & 44\\
      0.1      & 0.0 & 14 & 18\\
      0.1      & 0.2 & 1  & 0\\
      0.1      & 0.5 & 0  & 0\\
      0.5      & 0.5 & 1  & 0\\
      0.5      & 2.0 & 0  & 0\\
      \bottomrule
    \end{tabular}
  \end{center}
\end{table}
Let us first look into the results for \seq. From the first row of the table, we can see that only \num{48} instances experience any kind of increase in the second derivative during the execution, i.e., the improvements get smaller in time for all but \num{48} instances in the test set (equivalently, $\Prog$ is concave for all but \num{48} instances). From the second row, we can see that among these \num{48} instances that experience any kind of second derivative increase, \num{14} experience slow progress of $p=0.1$ at least once during their execution. Among these, only \num{1} instance experiences an increase in second derivative  of more than $q=0.2$ following the slow progress of $p=0.1$. If we further restrict the increase in the second derivative to $q=0.5$, then no instances are shown to stall prematurely. In the last row we see that even if the slow progress is relaxed to $p=0.5$, there are no instances that record a more significant increase in the second derivative of $2.0$.

Additionally, even though \gpu performed similarly to \seq with respect to stalling, we can still observe that it is on average less susceptible to premature stalling than \seq, as it recorded a smaller or equal amount of instances with premature stalling for all but one parameter combinations. 

We conclude that in practice, the premature stalling effect seems to occur only rarely and on individual instances. This shows that termination criteria based on local progress are reasonable.

\subsection{Analyzing GPU-parallel Bounds Propagation in Practice}
\label{sec:results_gpu_dom_prop}

As pointed out in Section \ref{sec:motivation}, \gpu traverses a potentially different sequence of bounds from the start to the fixed point than \seq. Because of this, computational experiments in \cite{SofranacGleixnerPokutta2020} report the speedup of \gpu over \seq for propagation runs to the fixed point. As bounds propagation is stopped early in practice, we will now use the progress measure to compare the two algorithms when stopped at different points in the execution. For each instance in the set, given a progress value $x \in [0, 100]$, the speedup of \gpu over \seq is computed by ${t_x^{\seq}}/{t^{\gpu}_x}$, where $t_x$ is the wall-clock time the algorithm takes to reach progress value $x$.\footnote{For $x=100$, we get the identical speedup at the fixed point evaluation as done in \cite{SofranacGleixnerPokutta2020}.} Then, the geometric mean of speedups over all the instances in the test set is reported. The results are shown on Figure \ref{fig:gpu_results}. When a given instance only has bound changes in the infinite phase, it is excluded from the finite phase comparisons (\num{57} instances). Likewise, instances with only finite progress are removed from the infinite phase (\num{164} instances). 

\begin{figure*}[htbp]
  \centerline{\includegraphics[width=0.7\textwidth]{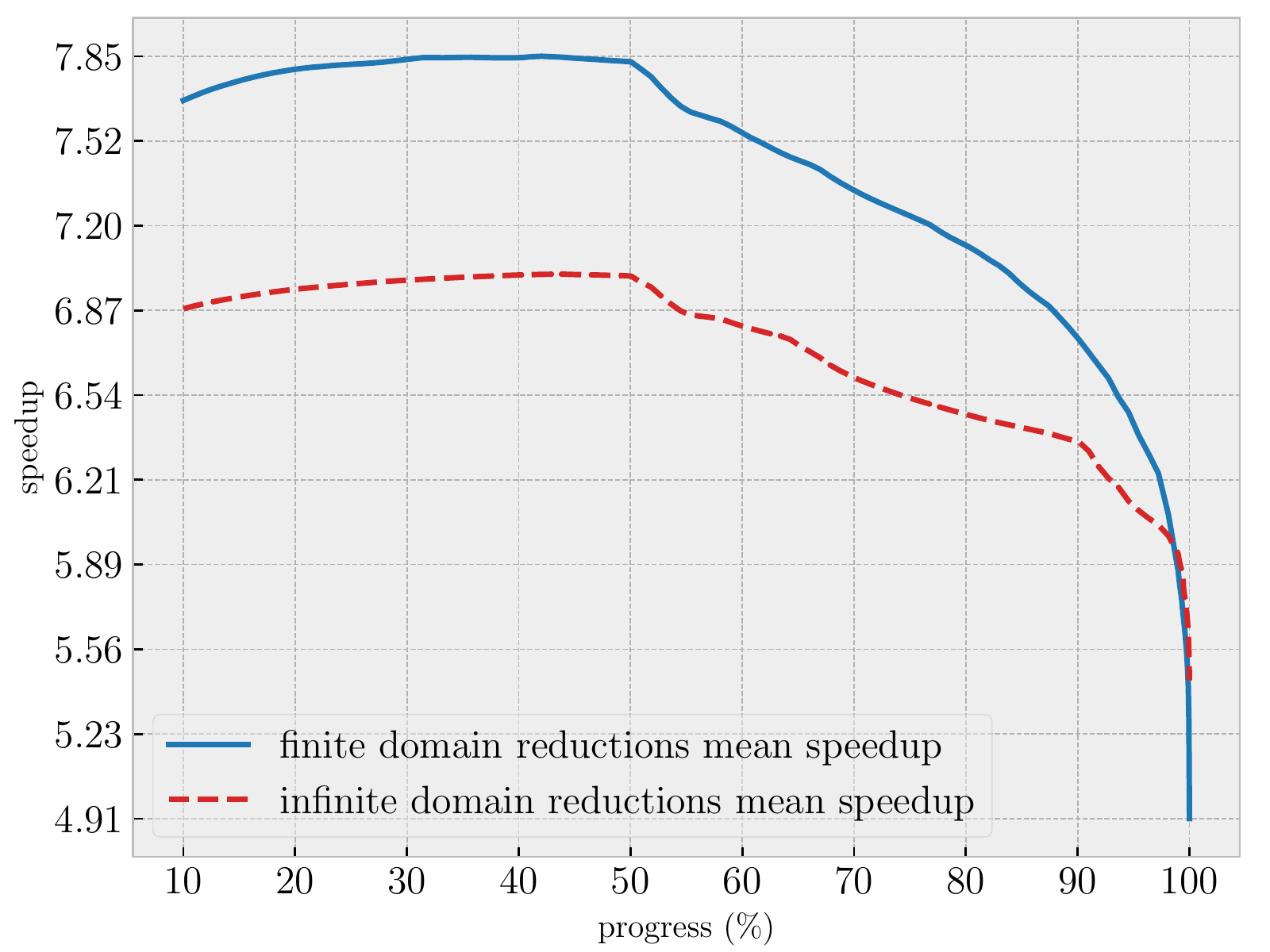}}
\caption{Speedup of the finite and the infinite domain reductions of \gpu over \seq for different percentages of progress made.}
\label{fig:gpu_results}
\end{figure*}

As we can see, for the propagation to the fixed point (\num{100} percent progress), \gpu is about \num{4.9} times faster than \seq in finite domain reductions. For the infinite domain reductions, \gpu is a factor of about \num{5.4} times faster than \seq. Next, we can see that the speedup is minimal at the fixed point, i.e., for any progress value between \num{10} and \num{100}, \gpu increases its speedup over \seq compared to the fixed-point speedup. The maximum speedups of around \num{7.8} for the finite domain reductions and about \num{7.0} for infinite domain reductions are achieved at the progress of roughly \num{50} percent. Additionally, notice that in the last few percent of progress there is a steep drop in speedup. This means that even for very weak stopping criteria which would stop the algorithms at the same point just before the limit is reached, \gpu would significantly increase its speedup over \seq. We conclude that \gpu is even more competitive against \seq in conjunction with stopping criteria than for the case of propagation to the fixed point.

\section{Outlook}
\label{sec:outlook}

In this work, we proposed a method to measure progress achieved by a given algorithm in the propagation of linear constraints with continuous and/or discrete variables. We showed how such a measure can be used to answer questions of practical relevance in the field of Mixed-Integer Programming.

One question that remains open is to what extent the finite reference bounds
produced by the weakest bounds procedure used here are actually realized by at
least one iterative bounds tightening algorithm.  The current procedure only
guarantees that they are finite if iterative bounds propagation can produce a
finite bound, and that no iterative bounds propagation algorithm can produce a
weaker bound.  A deeper analysis could yield a refined method to produce weakest
bounds that are tightest in the sense that they are actually achieved by at
least one iterative bounds propagation algorithm. This is part of future
research and could provide a stronger version of the framework.

Though our development was described for linear constraints, there are no conceptual barriers that prevent the notion of weakest bounds to be extended to more general classes of constraints. We demonstrated how the key issue of unbounded variable domains can be solved in order to obtain an algorithm-independent measure of progress. In this sense, our method is also relevant for constraint systems on (partially) unbounded domains, where normalization can be nontrivial. An important example is the class of factorable programs from the field of Global Optimization and Mixed-Integer Nonlinear Programming.

\bibliographystyle{abbrv}
\bibliography{measure}

\end{document}